\font\smallit=cmti10
\font\smalltt=cmtt10
\renewcommand\section{\@startsection {section}{1}{\z@}
{-30pt \@plus -1ex \@minus -.2ex}
{2.3ex \@plus.2ex}
{\normalfont\normalsize\bfseries}}
\renewcommand\subsection{\@startsection{subsection}{2}{\z@}
{-3.25ex\@plus -1ex \@minus -.2ex}
{1.5ex \@plus .2ex}
{\normalfont\normalsize\bfseries}}
\renewcommand{\@seccntformat}[1]{\csname the#1\endcsname. }
\newtheorem{theorem}{Theorem}
\newtheorem{lemma}[theorem]{Lemma}
\newtheorem*{EPNT}{The pentagonal number theorem}
\newtheorem*{PGF}{The generating function for $p(n)$}
\theoremstyle{definition}
\begin{document}
\begin{center}
\uppercase{\bf Partition Recurrences}
\vskip 20pt
{\bf Yuriy Choliy}\\
{\smallit East Hanover, New Jersey, USA}\\
{\tt yurkocholiy@hotmail.com}\\ 
\vskip 10pt
{\bf Louis W. Kolitsch}\\
{\smallit Department of Mathematics and Statistics, University of Tennessee--Martin, Martin,
Tennessee, USA}\\
{\tt lkolitsc@utm.edu}\\ 
\vskip 10pt
{\bf Andrew V. Sills}\\
{\smallit Department of Mathematical Sciences, Georgia Southern University, Statesboro,
Georgia, USA}\\
{\tt asills@georgiasouthern.edu}\\ 
\end{center}
\vskip 30pt
\centerline{\smallit Received: November 13, 2016, Revised: May 15, 2017, Accepted: , Published: } 
\vskip 30pt

\centerline{\bf Abstract}
\noindent
We present some Euler-type recurrences for the partition function $p(n)$.

\pagestyle{myheadings} 
\markright{\smalltt INTEGERS: ?? (201?)\hfill} 
\thispagestyle{empty} 
\baselineskip=12.875pt 
\vskip 30pt

\section{Introduction and Statement of Main Results}
\subsection{Euler's recurrence}
A \emph{partition} $\lambda$ of an integer $n$ is a finite, weakly decreasing 
sequence $(\lambda_1, \lambda_2, \dots, \lambda_l)$ of positive
integers (called the \emph{parts} of $\lambda$)
that sum to $n$.  Thus the seven partitions of $5$ are
\[ (5)\quad (4,1) \quad (3,2) \quad (3,1,1) \quad (2,2,1) \quad (2,1,1,1)\quad (1,1,1,1,1). \]
Let $p(n)$ denote the number of partitions of the integer $n$.
Euler proved the following recurrence for $p(n)$~\cite[p. 12, Cor. 1.8]{a76}:
\begin{multline} \label{EulerRec}
  p(n) - p(n-1) - p(n-2) + p(n-5) + p(n-7) - p(n-12) - p(n-15)  \\
  + \cdots + (-1)^j p(n- j(3j-1)/2) + (-1)^j p(n-j(3j+1)/2)+ \cdots \\
  = \left\{ \begin{array}{ll} 1 & \mbox{if $n=0$} \\
                0  & \mbox{otherwise} 
  \end{array}. \right. 
\end{multline}
The numbers $0,1,2,5,7,12,\dots$ appearing in~\eqref{EulerRec} are the
generalized pentagonal numbers,
and since $p(n)=0$ whenever $n<0$,
the left side of~\eqref{EulerRec} effectively terminates after roughly
$\sqrt{8n/3}$ terms.

As is well known, Euler's recurrence~\eqref{EulerRec} is a direct consequence of
two fundamental results of Euler
on partitions~\cite[p. 4, Eq. (1.2.3); p. 11, Cor. 1.7]{a76}, as follows.
Note that here and throughout, $x$ represents a formal variable.

\begin{PGF}
Let
$ P(x):= \sum_{n\geq 0} p(n) x^n. $ Then
\[ P(x) = \prod_{k\geq 1} \frac{1}{1-x^k}. \]
\end{PGF}

\begin{EPNT}
\[  \frac{1}{P(x)} = \sum_{k\in\mathbb{Z}} (-1)^k x^{k(3k-1)/2}. \]
\end{EPNT}

Therefore, 
\begin{multline} \label{2} 1 = P(x) \cdot \frac{1}{P(x)} = 
\left( \sum_{j\geq 0} p(j) x^j \right)\left( \sum_{k\in\mathbb{Z}} (-1)^k x^{k(3k-1)/2} \right)\\
 \sum_{n\geq 0} \left( \sum_{j\in\mathbb{Z}} (-1)^j p\Big(n-j(3j-1)/2 \Big) \right) x^n, \end{multline}
and~\eqref{EulerRec} follows from extracting the coefficient of $x^n$
from the extremes of~\eqref{2}.

The role played by the pentagonal numbers in~\eqref{EulerRec} is
played by the
 triangular numbers in the following recurrence due to 
 John Ewell~\cite[p. 125, Theorem 2]{e73}:
  For all integers $n$,
\begin{multline}  \label{triangular}
 p(n) - p(n-1) - p(n-3) + p(n-6) + p(n-10) - p(n-15) - p(n-21) \\+ \cdots + 
   (-1)^j p(n- (2j^2-j) ) + (-1)^j p(n- (2j^2+j))+ \cdots \\ = 
  \left\{ \begin{array}{ll} 0 & \mbox{if $n$ is odd} \\
                q(n/2)  & \mbox{if $n$ is even} 
  \end{array}, \right. \end{multline}
  where $q(n)$ denotes the number of partitions of $n$ into distinct parts.
  Ewell provides additional recurrences for $p(n)$ in~\cite{e73} and in~\cite{e04}.
\subsection{Some notation}

We will employ the notations of Ramanujan~\cite[p. 6, Eq. (1.2.3)]{b06},
 \[  \psi(x):= \sum_{n\geq 0} x^{n(n+1)/2} ,\]
and~\cite[p. 6, Eq. (1.2.2)]{b06}:
  \[ \varphi(x) = 1 + 2\sum_{j\geq 1} x^{j^2}. \]

\subsection{Statement of results}

In this paper, we present several partition recurrences, in the spirit of Euler
and Ewell.

 A partition recurrence where the squares and their doubles play the same role
 as the generalized pentagonal numbers in~\eqref{EulerRec} is as follows.
\begin{theorem} \label{squares}
For all integers $n$,
\begin{multline}
p(n) - p(n-1) - p(n-2) + p(n-4) + p(n-8) - p(n-9) - p(n-18) + p(n-16) + \cdots \\
 \cdots + (-1)^j p(n-j^2) + (-1)^j p(n-2j^2) + \cdots \\ =
   \left\{ \begin{array}{ll} 0 & \mbox{if $n$ is odd} \\
                qq(n)  & \mbox{if $n$ is even} 
  \end{array}, \right. \label{SquareRec} 
\end{multline}
where $qq(n)$ denotes the number of partitions of $n$ into distinct, odd parts.
\end{theorem}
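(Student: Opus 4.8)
The plan is to recast the left-hand side of~\eqref{SquareRec} as a coefficient in a product of formal power series, evaluate the resulting theta-type factor by means of the Jacobi triple product, and then interpret what remains combinatorially. (The case $n<0$ is trivial, both sides being zero, so only $n\geq 0$ needs attention.)

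First I would multiply out. Writing $L_n$ for the left-hand side of~\eqref{SquareRec}, the generating function for $p(n)$ together with the Cauchy product gives
\[
\sum_{n\geq 0} L_n\, x^n \;=\; P(x)\,M(x), \qquad M(x):=1+\sum_{j\geq 1}(-1)^j x^{j^2}+\sum_{j\geq 1}(-1)^j x^{2j^2}.
\]
Because $j^2\equiv j\pmod 2$, one has $\varphi(-x)=1+2\sum_{j\geq1}(-1)^j x^{j^2}$ and $\varphi(-x^2)=1+2\sum_{j\geq1}(-1)^j x^{2j^2}$, hence $M(x)=\tfrac12\bigl(\varphi(-x)+\varphi(-x^2)\bigr)$. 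So it suffices to prove the product identity
\[
P(x)\cdot\frac{\varphi(-x)+\varphi(-x^2)}{2}\;=\;\sum_{\substack{n\geq 0\\ n\ \mathrm{even}}} qq(n)\,x^n .
\]

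The heart of the argument is to evaluate $P(x)\varphi(-x)$ and $P(x)\varphi(-x^2)$ as infinite products. By the Jacobi triple product identity with $z=-1$,
\[
\varphi(-x)=\sum_{j\in\mathbb{Z}}(-1)^j x^{j^2}=\prod_{k\geq 1}(1-x^{2k})(1-x^{2k-1})^2 ,
\]
and since $\prod_{k\geq1}(1-x^{2k})(1-x^{2k-1})=\prod_{k\geq1}(1-x^k)=1/P(x)$, this factors as $\varphi(-x)=\frac{1}{P(x)}\prod_{k\geq1}(1-x^{2k-1})$, i.e. $P(x)\varphi(-x)=\prod_{k\geq1}(1-x^{2k-1})$. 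Replacing $x$ by $x^2$ gives $\varphi(-x^2)=\frac{1}{P(x^2)}\prod_{k\geq1}(1-x^{4k-2})$; multiplying by $P(x)$ and using $\frac{P(x^2)}{P(x)}=\prod_{k\geq1}(1-x^{2k-1})$ together with $\prod_{k\geq1}(1-x^{4k-2})=\prod_{k\geq1}(1-x^{2k-1})(1+x^{2k-1})$ then yields $P(x)\varphi(-x^2)=\prod_{k\geq1}(1+x^{2k-1})$. (Everything here is classical; the only real care needed is bookkeeping of odd versus even indices in the products.)

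It remains to identify $\tfrac12\bigl(\prod_{k\geq1}(1-x^{2k-1})+\prod_{k\geq1}(1+x^{2k-1})\bigr)$. By definition of $qq$, $\prod_{k\geq1}(1+x^{2k-1})=\sum_{n\geq0}qq(n)x^n$. In $\prod_{k\geq1}(1-x^{2k-1})$, a partition of $n$ into $\ell$ distinct odd parts contributes $(-1)^\ell$, and $\ell\equiv n\pmod 2$ since every part is odd, so $\prod_{k\geq1}(1-x^{2k-1})=\sum_{n\geq0}(-1)^n qq(n)x^n$. Adding and halving annihilates the odd-index terms and leaves $\sum_{n\ \mathrm{even}}qq(n)x^n$, as required; comparing coefficients of $x^n$ yields~\eqref{SquareRec}. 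The main obstacle is the middle step: getting the Jacobi triple product specialization and the subsequent splitting of products into odd and even parts exactly right. Everything else is routine.
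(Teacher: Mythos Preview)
Your proof is correct and follows essentially the same route as the paper's: both arguments hinge on the identities $P(x)\varphi(-x)=\prod_{k\geq1}(1-x^{2k-1})$ and $P(x)\varphi(-x^2)=\prod_{k\geq1}(1+x^{2k-1})$, together with the even--odd averaging $\tfrac12\bigl(\prod(1+x^{2k-1})+\prod(1-x^{2k-1})\bigr)=\sum_{n\ \mathrm{even}}qq(n)x^n$. The only cosmetic difference is direction: the paper starts from the right-hand side and works toward $P(x)\cdot\tfrac12(\varphi(-x)+\varphi(-x^2))$, whereas you start from the left-hand side and work toward the $qq$ generating function.
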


The following theorem is a variation of Theorem~\ref{squares}.
\begin{theorem} \label{squaresvar}
For all integers $n$,
\begin{multline}
p(n) - 2p(n-1) + 2p(n-4) - 2p(n-9) + 2p(n-16) + \cdots +(-1)^j 2p(n-j^2) + \cdots \\ =
     (-1)^n qq(n)   \label{SquareVar} 
\end{multline}
where $qq(n)$ denotes the number of partitions of $n$ into distinct, odd parts.
\end{theorem}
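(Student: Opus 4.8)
The plan is to read off the left-hand side of~\eqref{SquareVar} as a coefficient in an explicit infinite product and then recognize that product as the generating function for signed partitions into distinct odd parts. First I would note that since $j^2\equiv j\pmod 2$ we have $\varphi(-x)=1+2\sum_{j\geq 1}(-1)^j x^{j^2}$, so that, for every $n$, the left side of~\eqref{SquareVar} is precisely the coefficient of $x^n$ in $P(x)\,\varphi(-x)$. The whole problem therefore reduces to identifying this product.

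Next I would simplify $P(x)\,\varphi(-x)$. Using the Jacobi triple product identity in the form $\varphi(x)=\prod_{k\geq 1}(1-x^{2k})(1+x^{2k-1})^2$ and replacing $x$ by $-x$ gives $\varphi(-x)=\prod_{k\geq 1}(1-x^{2k})(1-x^{2k-1})^2$. Multiplying by $P(x)=\prod_{k\geq 1}\big((1-x^{2k})(1-x^{2k-1})\big)^{-1}$ and cancelling the factors $(1-x^{2k})$ and one copy of $(1-x^{2k-1})$ in each term yields
\[ P(x)\,\varphi(-x)=\prod_{k\geq 1}(1-x^{2k-1}). \]
(Equivalently, one can avoid Jacobi's identity by invoking the classical theta evaluation $\varphi(-x)=\prod_{k\geq1}(1-x^k)/(1+x^k)$ together with Euler's $\prod_{k\geq1}(1+x^k)=\prod_{k\geq1}(1-x^{2k-1})^{-1}$, which lands on the same product.)

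Finally I would expand $\prod_{k\geq1}(1-x^{2k-1})$ directly: its coefficient of $x^n$ is $\sum_\lambda(-1)^{\ell(\lambda)}$, the sum running over all partitions $\lambda$ of $n$ into distinct odd parts, where $\ell(\lambda)$ is the number of parts. The crucial observation is that when every part is odd, $\ell(\lambda)\equiv\lambda_1+\cdots+\lambda_{\ell(\lambda)}=n\pmod 2$, so each such $\lambda$ contributes with the single common sign $(-1)^n$; hence the coefficient of $x^n$ equals $(-1)^n\,qq(n)$. Comparing this with the previous display and extracting the coefficient of $x^n$ finishes the proof.

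I do not anticipate a genuine obstacle: the substantive content is the product simplification in the second step, and the only subtlety in the combinatorial step is the parity remark $\ell(\lambda)\equiv n\pmod 2$. The one place demanding a little care is the bookkeeping in the cancellation $P(x)\varphi(-x)=\prod_{k\geq1}(1-x^{2k-1})$; if one preferred a route that does not quote Jacobi's triple product or the theta evaluation, the effort would shift to establishing that identity by elementary series manipulation, and that is where I would expect to spend the most time.
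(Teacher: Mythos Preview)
Your proposal is correct and follows essentially the same route as the paper: both arguments reduce to the identity $P(x)\,\varphi(-x)=\prod_{k\ge1}(1-x^{2k-1})$ via the product form of $\varphi$, and then identify the latter product with $\sum_{n\ge0}(-1)^n qq(n)\,x^n$. The only cosmetic difference is that the paper obtains this last identification by substituting $-x$ for $x$ in the generating function $\sum qq(n)x^n=\prod(1+x^{2k-1})$, whereas you unpack it combinatorially via the parity remark $\ell(\lambda)\equiv n\pmod 2$; these are the same observation.
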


Continuing with the polygonal numbers, we have the following theorem involving the generalized heptagonal numbers.
\begin{theorem} \label{heptagonal}
If $n$ is even, then
\[ p(n) - p(n-1) - p(n-4) + p(n-7) + p(n-13) - p(n-18) - p(n-27) + \cdots \]
\[ \cdots + (-1)^j p(n-j(5j-3)/2) + (-1)^j p(n-j(5j+3)/2) + \cdots \] 
\[ = \sum_{i\in \mathbb{Z}} \left(p\left(\frac{n-r^e_i }{2 }  \right) - p\left(\frac{n-s^e_i }{2 }  \right)\right). \]
If $n$ is odd, then
\[ p(n) - p(n-1) - p(n-4) + p(n-7) + p(n-13) - p(n-18) - p(n-27) + \cdots \]
\[ \cdots + (-1)^j p(n-j(5j-3)/2) + (-1)^j p(n-j(5j+3)/2) + \cdots \] 
\[ = \sum_{i\in \mathbb{Z}} \left(-p\left(\frac{n-r^o_i }{2 }  \right) + p\left(\frac{n-s^o_i }{2 } \right)\right) \] 
where $r^e_i$ and $r^o_i$ are the even and odd terms, respectively, in the sequence $15k^2-4k$ for $k$ an integer and $s^e_i$ and $s^o_i$ are the even and odd terms, respectively, in the sequence $15k^2-14k+3$ for $k$ an integer.  Specifically for $i \in \mathbb{Z}$, \[r^e_i = 15(2i)^2 -4(2i) = 60i^2-8i, \] \[r^o_i = 15(2i+1)^2-4(2i+1) = 60i^2+52i+11,\] \[s^e_i = 15(2i+1)^2-14(2i+1)+3 = 60i^2+32i+4,\] \[ s^o_i = 15(2i)^2 - 14(2i) + 3 = 60i^2-28i + 3.\]
\end{theorem}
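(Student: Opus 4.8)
\medskip
\noindent\textbf{Proof plan.} The plan is to recognize both sides of the claimed identity as coefficients in a product of power series, compress the whole statement into a single theta--function identity, and then dispatch that identity. Write $(a;q)_\infty:=\prod_{k\ge 0}(1-aq^k)$.

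Put $\Theta(x):=\sum_{k\in\mathbb{Z}}(-1)^k x^{k(5k-3)/2}=1-x-x^4+x^7+x^{13}-x^{18}-\cdots$, the series supported on the generalized heptagonal numbers; pairing $k$ with $-k$, its coefficients reproduce the alternating combination $(-1)^j p(n-j(5j-3)/2)+(-1)^j p(n-j(5j+3)/2)$, so---exactly as~\eqref{EulerRec} was extracted from~\eqref{2}---the common left side of the two displays in the theorem is the coefficient of $x^n$ in $P(x)\,\Theta(x)$. To treat the right side, set $\theta_1(x):=\sum_{k\in\mathbb{Z}}(-1)^k x^{15k^2-4k}$ and $\theta_2(x):=\sum_{k\in\mathbb{Z}}(-1)^k x^{15k^2-14k+3}$. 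Using $\sum_n p\bigl((n-m)/2\bigr)x^n=x^m P(x^2)$ (summed over $n\equiv m\pmod 2$) together with $15k^2-4k\equiv k\pmod2$ and $15k^2-14k+3\equiv k+1\pmod2$, one checks that for even $n$ only the $k$--even terms of $P(x^2)\theta_1$ and the $k$--odd terms of $P(x^2)\theta_2$ contribute, and, writing $k=2i$ and $k=2i+1$, these terms with their signs $(-1)^k$ are precisely the families $r^e_i$ and $s^e_i$ of the statement; for odd $n$ the parities of $k$ reverse and one obtains $r^o_i,s^o_i$. Hence the two displayed formulas together are equivalent to the single generating--function identity
\[ P(x)\,\Theta(x)=P(x^2)\bigl(\theta_1(x)+\theta_2(x)\bigr). \]

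From the product for $P$ we have $P(x)/P(x^2)=(x^2;x^2)_\infty/(x;x)_\infty=1/(x;x^2)_\infty$, so the above is the same as $\Theta(x)/(x;x^2)_\infty=\theta_1(x)+\theta_2(x)$. By the Jacobi triple product identity, $\Theta(x)=(x;x^5)_\infty(x^4;x^5)_\infty(x^5;x^5)_\infty$, whence
\[ \frac{\Theta(x)}{(x;x^2)_\infty}=\frac{(x^2;x^2)_\infty\,\Theta(x)}{(x;x)_\infty}=\frac{(x^2;x^2)_\infty}{(x^2;x^5)_\infty(x^3;x^5)_\infty}=\frac{(x^4;x^{10})_\infty(x^6;x^{10})_\infty(x^{10};x^{10})_\infty}{(x^3;x^{10})_\infty(x^7;x^{10})_\infty}, \]
the last equality obtained by splitting every factor modulo $10$ and cancelling, and where the numerator on the right equals $\sum_{n\in\mathbb{Z}}(-1)^n x^{5n^2-n}$ by the Jacobi triple product identity once more. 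Jacobi's identity likewise gives $\theta_1(x)=(x^{11};x^{30})_\infty(x^{19};x^{30})_\infty(x^{30};x^{30})_\infty$ and $\theta_2(x)=x^3(x;x^{30})_\infty(x^{29};x^{30})_\infty(x^{30};x^{30})_\infty$. So, after clearing the denominator, the theorem is equivalent to the identity
\[ (x^3;x^{10})_\infty(x^7;x^{10})_\infty\bigl(\theta_1(x)+\theta_2(x)\bigr)=(x^4;x^{10})_\infty(x^6;x^{10})_\infty(x^{10};x^{10})_\infty. \]

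This last identity is of Rogers--Ramanujan--Slater type: the interleaved quadratic exponents $15k^2\mp 4k$ and $15k^2\mp 14k+3$ carrying the sign $(-1)^k$ are characteristic of the quintuple product identity, and I would prove it using the quintuple product identity together with the Jacobi triple product identity and elementary manipulations of infinite products (such identities are, for the most part, also recorded in Slater's list and can be reached by Bailey--pair arguments). I expect this theta--function identity to be the essential difficulty. Once it is established, reversing the three reductions above---and recalling that separating $P(x^2)(\theta_1(x)+\theta_2(x))$ according to the parity of the exponent is exactly what splits the statement into its even--$n$ and odd--$n$ cases and produces the four families $r^e_i,r^o_i,s^e_i,s^o_i$---completes the proof.
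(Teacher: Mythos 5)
Your reduction is correct and is essentially the paper's proof: both arguments read the left side off of $P(x)\Theta(x)$, use the Jacobi triple product to write $P(x)\Theta(x)=\prod_{k\ge1}(1-x^{5k-2})^{-1}(1-x^{5k-3})^{-1}$, and then invoke the quintuple product identity to produce the series $\theta_1+\theta_2$; your parity bookkeeping for $r^e_i, r^o_i, s^e_i, s^o_i$ is also right. The only step you leave undone --- the identity $(x^3;x^{10})_\infty(x^7;x^{10})_\infty\bigl(\theta_1(x)+\theta_2(x)\bigr)=(x^4;x^{10})_\infty(x^6;x^{10})_\infty(x^{10};x^{10})_\infty$, which you flag as the essential difficulty --- is in fact a direct instance of the quintuple product identity and closes in two lines: the QPI gives
\[
\theta_1(x)+\theta_2(x)=\prod_{k\geq1}(1-x^{10k})(1-x^{20k-16})(1-x^{20k-4})(1+x^{10k-7})(1+x^{10k-3}),
\]
(after sending $k\mapsto -k$ in $\theta_2$ to match the exponent $15k^2+14k+3$), and multiplying by $(x^3;x^{10})_\infty(x^7;x^{10})_\infty=\prod_{k\ge1}(1-x^{10k-7})(1-x^{10k-3})$ converts the two factors $(1+x^{10k-7})(1+x^{10k-3})$ into $(1-x^{20k-14})(1-x^{20k-6})$; sorting residues modulo $20$ then recombines everything into $(x^4;x^{10})_\infty(x^6;x^{10})_\infty(x^{10};x^{10})_\infty$. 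So your plan is sound and matches the published argument; no Bailey-pair or Slater-list machinery is needed.
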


The following recurrence involves the generalized octagonal numbers.
\begin{theorem} \label{octagonal} If $n$ is even, then
\[ p(n) - p(n-1) - p(n-5) + p(n-8) + p(n-16) - p(n-21) - p(n-33) + \cdots \]
\[ \cdots + (-1)^j p(n-j(3j-2)) + (-1)^j p(n-j(3j+2)) + \cdots \]  
\[= \sum_{i\geq 1} p\left(\frac{n-3t^e_i }{2 }\right) \]
where $t^e_i$ is the $i$th even triangular number ($t^e_1 = 0$, $t^e_2 = 6$, $t^e_3 = 10$,
etc.), i.e.,
\[ t^e_i = \frac{(2i-1)(2i-1+(-1)^i)}{2}. \]
If $n$ is odd, then 
\[ p(n) - p(n-1) - p(n-5) + p(n-8) + p(n-16) - p(n-21) - p(n-33) + \cdots \]
\[ \cdots + (-1)^j p(n-j(3j-2)) + (-1)^j p(n-j(3j+2)) + \cdots  \]
\[ = \sum_{i\geq 1} p\left(\frac{n-3t^o_i }{2 }\right) \]
where $t^o_i$ is the $i$th odd triangular number, i.e.,
 \[ t^o_i = \frac{(2i-1)(2i-1-(-1)^i)}{2}\].   
\end{theorem}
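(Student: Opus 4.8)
The plan is to derive Theorem~\ref{octagonal} in exactly the same way that Euler's recurrence~\eqref{EulerRec} was derived: by multiplying the generating function $P(x)$ by a theta-like series whose exponents are the generalized octagonal numbers $j(3j\pm 2)$, and then identifying the product. The generalized octagonal numbers $j(3j-2)$ for $j\in\mathbb{Z}$ arise as the exponents in a classical Jacobi-triple-product specialization; concretely, one has
\[
\sum_{j\in\mathbb{Z}} (-1)^j x^{\,j(3j-2)} = \prod_{k\geq 1}(1-x^{6k})(1-x^{6k-1})(1-x^{6k-5}),
\]
which is the specialization of the Jacobi triple product $\sum_j (-1)^j z^j q^{j^2}$ with $q\mapsto x^3$, $z\mapsto x^{-2}$. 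So the first step is to write the left-hand side of the claimed identity as the coefficient of $x^n$ in
\[
P(x)\sum_{j\in\mathbb{Z}}(-1)^j x^{\,j(3j-2)} = \frac{1}{\prod_{k\geq1}(1-x^k)}\cdot \prod_{k\geq1}(1-x^{6k})(1-x^{6k-1})(1-x^{6k-5}).
\]

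The second step is to simplify this infinite product. Splitting $\prod_{k\geq1}(1-x^k)$ according to residues mod $6$ and cancelling the factors $(1-x^{6k})$, $(1-x^{6k-1})$, $(1-x^{6k-5})$ that appear in the numerator, one is left with
\[
\frac{1}{\prod_{k\geq1}(1-x^{6k-2})(1-x^{6k-3})(1-x^{6k-4})} = \frac{1}{\prod_{k\geq1}(1-x^{3(2k-1)})(1-x^{2(3k-1)})(1-x^{2(3k-2)})}.
\]
The factor $\prod_{k\geq1}\frac{1}{1-x^{2(3k-1)}(1)}\cdot$ hmm—more cleanly, $(1-x^{6k-2})(1-x^{6k-4}) = (1-x^{2(3k-1)})(1-x^{2(3k-2)})$ ranges over $1-x^{2m}$ for all $m$ not divisible by $3$, while $(1-x^{6k-3}) = 1-x^{3(2k-1)}$ ranges over $1-x^{3m}$ for $m$ odd. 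The plan is to massage this product into the form $\left(\sum_{i} x^{3t_i}\right)\cdot\big(\text{something even in }x\big)$, where $t_i$ runs over triangular numbers; the natural tool is Gauss's identity
\[
\psi(x) = \sum_{n\geq0} x^{n(n+1)/2} = \prod_{k\geq1}\frac{1-x^{2k}}{1-x^{2k-1}},
\]
applied with $x\mapsto x^3$ to produce $\sum_i x^{3t_i}$, together with $\prod_{k\geq1}\frac{1}{1-x^{2k}} = P(x^2)$. After the dust settles, the generating-function identity should read
\[
P(x)\sum_{j\in\mathbb{Z}}(-1)^j x^{\,j(3j-2)} \;=\; \psi(x^3)\, P(x^2) \;=\; \left(\sum_{i\geq1} x^{3 t_{i-1}}\right)\left(\sum_{m\geq0} p(m) x^{2m}\right),
\]
where $t_0 = 0, t_1 = 1, t_2 = 3, \dots$ are the triangular numbers.

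The final step is to extract the coefficient of $x^n$ from both sides. On the right, a monomial $x^{3t_i + 2m}$ contributes, so the coefficient of $x^n$ is $\sum_{i: \, n - 3t_i \text{ even}, \ge 0} p\big((n-3t_i)/2\big)$; the parity of $n-3t_i$ equals the parity of $n - t_i = n - i$ shifted, so for fixed parity of $n$ only the triangular numbers $t_i$ of the matching parity survive—which is exactly the split into $t^e_i$ and $t^o_i$ in the statement, and one checks the displayed closed forms $t^e_i = (2i-1)(2i-1+(-1)^i)/2$, $t^o_i = (2i-1)(2i-1-(-1)^i)/2$ enumerate the even and odd triangular numbers respectively. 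I expect the main obstacle to be bookkeeping rather than conceptual: getting the Jacobi-triple-product specialization with the correct sign $(-1)^j$ and the correct pair of progressions $6k-1, 6k-5$ so that the cancellation against $1/(x;x)_\infty$ is exact, and then correctly tracking which residues mod $6$ remain and reassembling them as $\psi(x^3)P(x^2)$. A secondary subtlety is confirming that the two halves $j(3j-2)$ and $j(3j+2)$ of the alternating sum on the left are jointly captured by the single sum $\sum_{j\in\mathbb{Z}}(-1)^j x^{j(3j-2)}$ (replacing $j$ by $-j$ turns $j(3j-2)$ into $j(3j+2)$, and $(-1)^{-j}=(-1)^j$, so indeed they are), and that the signs on the left match $(-1)^j$ rather than $(-1)^{j+1}$ for the second family.
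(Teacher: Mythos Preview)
Your proposal is correct and follows essentially the same route as the paper: apply Jacobi's triple product to $\sum_{j\in\mathbb{Z}}(-1)^j x^{j(3j-2)}$, cancel the resulting factors against $P(x)=\prod_{k\ge1}(1-x^k)^{-1}$ to obtain $\prod_{k\ge1}(1-x^{6k-2})^{-1}(1-x^{6k-3})^{-1}(1-x^{6k-4})^{-1}$, and then recognize this product as $P(x^2)\,\psi(x^3)$ before extracting the coefficient of $x^n$ and splitting on parity. The only cosmetic difference is that the paper passes through the intermediate product form $\prod_{k\ge1}(1-x^{12k})(1+x^{12k-9})(1+x^{12k-3})/(1-x^{2k})$ to identify $\psi(x^3)$, whereas you invoke Gauss's identity $\psi(x)=\prod_{k\ge1}(1-x^{2k})/(1-x^{2k-1})$ directly; both yield the same factorization.
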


  A two-color partition of $n$ is similar to a
partition of $n$, but where a given positive integer can
appear as a part in either of two distinguished kinds.  For instance, if our two ``colors"
are represented by ordinary and boldface type, the five two-color partitions of $2$ are
\[ (2) \quad (\mathbf{2}) \quad (1,1) \quad (1,\mathbf{1}) \quad  (\mathbf{1},\mathbf{1}).\]
Let $p_2(n)$ denote the number of two-color partitions of $n$.  

Notice that 
since $p_2(n)$
is the coefficient of $x^n$ in the series expansion of $[P(x)]^2$, we
immediately have
\[ p_2(n) = \sum_{i=0}^n p(i) p(n-i), \] and thus Theorem~\ref{p_2} below could be
rephrased entirely in terms of the partition function $p(n)$, if we so choose.
\begin{theorem} \label{p_2} If $n$ is even, then
\[  p(n) = \sum_{i\geq 1} p_2\left(\frac{n-t^e_i }{2 }  \right),\]
where $t^e_i$ is the $i$th even triangular number. 
If $n$ is odd, then
\[  p(n) = \sum_{i\geq 1} p_2\left(\frac{n-t^o_i }{2 }  \right),\]
where $t^o_i$ is the $i$th odd triangular number.
\end{theorem}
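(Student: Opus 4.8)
The plan is to recast both identities of the theorem as a single identity between formal power series in $x$, and then to verify that identity from the product representations of $P(x)$ and of Ramanujan's function $\psi(x)$.

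First I would merge the even and odd cases. For a fixed triangular number $t=k(k+1)/2$ (with $k\ge 0$), the terms $p_2\big((n-t)/2\big)$ appearing in the theorem are exactly those with $n\equiv t\pmod 2$ and $n\ge t$; writing such an $n$ as $n=t+2m$ with $m\ge 0$, and summing over all $t$ and all $m$, gives
\[
\sum_{n\ge 0}\Bigg(\sum_{i\ge 1}p_2\Big(\tfrac{n-t^{*}_i}{2}\Big)\Bigg)x^{n}
 =\sum_{k\ge 0}\ \sum_{m\ge 0}p_2(m)\,x^{k(k+1)/2+2m}
 =\psi(x)\,\big[P(x^{2})\big]^{2},
\]
where $t^{*}_i$ means $t^{e}_i$ when $n$ is even and $t^{o}_i$ when $n$ is odd, and where I have used that $k(k+1)/2$ runs over each triangular number exactly once (so $\sum_{k\ge 0}x^{k(k+1)/2}=\psi(x)$) together with $\sum_{m\ge 0}p_2(m)y^{m}=[P(y)]^{2}$. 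Because the even (resp.\ odd) triangular numbers contribute only even (resp.\ odd) powers of $x$, the two parity cases of the theorem are precisely the even-degree and odd-degree parts of this one identity. Hence the theorem is equivalent to the power-series identity $P(x)=\psi(x)\,\big[P(x^{2})\big]^{2}$.

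The second step is to prove this product identity. Splitting $P(x)=\prod_{k\ge 1}(1-x^{k})^{-1}$ according to the parity of $k$ gives $P(x)=\big(\prod_{k\ge 1}(1-x^{2k-1})\big)^{-1}P(x^{2})$, so it suffices to show
\[
\psi(x)=\prod_{k\ge 1}\frac{1-x^{2k}}{1-x^{2k-1}},
\]
which is Gauss's classical product evaluation of the theta series $\psi$, itself a specialization of Jacobi's triple product (see, e.g., Berndt~\cite{b06}). Granting it,
\begin{multline*}
\psi(x)\,\big[P(x^{2})\big]^{2}
 =\prod_{k\ge 1}\frac{1-x^{2k}}{1-x^{2k-1}}\cdot\prod_{k\ge 1}\frac{1}{(1-x^{2k})^{2}}\\
 =\prod_{k\ge 1}\frac{1}{(1-x^{2k-1})(1-x^{2k})}
 =\prod_{m\ge 1}\frac{1}{1-x^{m}}=P(x),
\end{multline*}
and comparing coefficients of $x^{n}$ on the two sides, split according to the parity of $n$, recovers the two displayed formulas.

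The single substantive ingredient here is Gauss's identity for $\psi(x)$; everything else is formal bookkeeping. I therefore expect the only real work to be expository — deciding how much of this standard theta-function background to reproduce versus cite — and perhaps noting in passing that the same computation applied to just one factor of $P$, namely $\psi(x)\,P(x^{2})=\prod_{k\ \mathrm{odd}}(1-x^{k})^{-1}$, yields a companion recurrence that could be recorded as a remark.
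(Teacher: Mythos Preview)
Your proof is correct and rests on the same substantive ingredient as the paper's---the product evaluation $\psi(x)=\prod_{k\ge 1}\frac{1-x^{2k}}{1-x^{2k-1}}$---but packages the argument more economically. The paper first proves the lemma
\[
P(x)\pm P(-x)=P(x^2)^2\bigl(\psi(x)\pm\psi(-x)\bigr)
\]
and then handles the even and odd cases of the theorem separately by extracting the $+$ and $-$ parts. You instead establish the single identity $P(x)=\psi(x)\,P(x^2)^2$ directly and observe that its even- and odd-degree parts already give the two cases. The two routes are equivalent (adding and subtracting the paper's $\pm$ identities recovers yours; substituting $x\mapsto -x$ in yours and adding or subtracting recovers the paper's), so neither gains extra generality, but yours avoids the intermediate parity decomposition and is a little shorter. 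Your closing remark about $\psi(x)\,P(x^2)=\prod_{k\ \mathrm{odd}}(1-x^k)^{-1}$ is correct as well and does not appear in the paper.
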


  Overpartitions were introduced by S. Corteel and J. Lovejoy in~\cite{cl04}, and
have since accumulated a rather vast literature.  
  An \emph{overpartition} of $n$ is a two-color partition of $n$ where a given 
positive integer in the second color may appear at most once as a part.  
(Parts in the first color may be repeated any number of times.) Traditionally,
parts in the first color are denoted as ordinary numerals and parts in the 
second color are denoted as overlined numerals.  Thus the eight overpartitions
of $3$ are as follows:

\[ (3) \quad (\overline{3}) \quad (2,1) \quad (\overline{2},1) \quad
(2,\overline{1}) \quad (\overline{2}, \overline{1}) \quad (1,1,1) \quad
(\overline{1}, 1, 1) .\]

  Just as is the case with ordinary partitions, restrictions can be placed on the parts in overpartitions.  Overpartitions and a restricted class of overpartitions make an unexpected appearance in the following result.
  
\begin{theorem} 
\label{vThm}
 Let $\overline{p}(n)$ denote the number of unrestricted overpartitions of $n$ and let $\overline{p}_{r}(n)$ denote the number of overpartitions of $n$ where the overlined parts are either even and appear in one color or are congruent to $1$ or $7$ modulo $8$ and can appear in two colors.  Let $v(n)$ denote the sequence determined by pairing the left side of~\eqref{SquareRec} with the right side of~\eqref{EulerRec}, namely
\begin{multline}
v(n) - v(n-1) - v(n-2) + v(n-4) + v(n-8) - v(n-9) - v(n-18) + v(n-16) + \cdots \\
 \cdots + (-1)^j v(n-j^2) + (-1)^j v(n-2j^2) + \cdots \\ =
   \left\{ \begin{array}{ll} 1 & \mbox{if $n=0$,} \\
               0  & \mbox{otherwise} 
  \end{array}. \right. \label{vRec} 
\end{multline}
Then, for $n$ even, $v(n) = \overline{p}(n/2)$  and for $n$ odd, $v(n) = \overline{p}_{r}((n-1)/2)$.
\end{theorem}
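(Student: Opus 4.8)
The plan is to pass to generating functions and perform an even/odd dissection. Put $V(x):=\sum_{n\ge 0}v(n)x^n$. By the same coefficient comparison that produces \eqref{EulerRec} from \eqref{2}, the recurrence \eqref{vRec} says precisely that
\[ V(x)\left(\sum_{j\ge 0}(-1)^jx^{j^2}+\sum_{j\ge 1}(-1)^jx^{2j^2}\right)=1 . \]
By the definition of $\varphi$, the series in parentheses is $\tfrac12\left(\varphi(-x)+\varphi(-x^2)\right)$, so
\[ V(x)=\frac{2}{\varphi(-x)+\varphi(-x^2)} . \]
(This is the same generating-function computation that underlies Theorem~\ref{squares}, with $1$ in place of the right side of \eqref{SquareRec}.)

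I would then split $V$ into even and odd parts, using $V(-x)=2/\left(\varphi(x)+\varphi(-x^2)\right)$. Thus
\[ \sum_{m\ge 0}v(2m)x^{2m}=\frac{V(x)+V(-x)}{2}=\frac{1}{\varphi(-x)+\varphi(-x^2)}+\frac{1}{\varphi(x)+\varphi(-x^2)} , \]
and clearing denominators and using the classical theta identity $\varphi(x)\varphi(-x)=\varphi(-x^2)^2$ collapses the right side to $1/\varphi(-x^2)$. Since $1/\varphi(-x^2)=\prod_{k\ge 1}\frac{1+x^{2k}}{1-x^{2k}}=\sum_{m\ge 0}\overline{p}(m)x^{2m}$ is the overpartition generating function in the variable $x^2$, this gives $v(2m)=\overline{p}(m)$. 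For the odd part, the same identity together with the elementary relations $\varphi(x)+\varphi(-x)=2\varphi(x^4)$ and $\varphi(x)-\varphi(-x)=4x\psi(x^8)$ yields
\[ \sum_{m\ge 0}v(2m+1)x^{2m+1}=\frac{V(x)-V(-x)}{2}=\frac{2x\,\psi(x^8)}{\varphi(-x^2)\left(\varphi(-x^2)+\varphi(x^4)\right)} , \]
so that, writing $q=x^2$, $v(2m+1)$ is the coefficient of $q^m$ in $\dfrac{2\psi(q^4)}{\varphi(-q)\left(\varphi(-q)+\varphi(q^2)\right)}$.

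It remains to identify this last series with the generating function of $\overline{p}_r$. From the description of $\overline{p}_r(n)$ one reads off
\[ \sum_{m\ge 0}\overline{p}_r(m)q^m=\frac{1}{\prod_{k\ge 1}(1-q^k)}\cdot\prod_{k\ge 1}\left(1+q^{2k}\right)\cdot\prod_{\substack{j\ge 1\\ j\equiv\pm 1\ (\mathrm{mod}\ 8)}}\left(1+q^j\right)^{2} , \]
and since $\sum_{m\ge 0}\overline{p}(m)q^m=1/\varphi(-q)$, the equality $v(2m+1)=\overline{p}_r(m)$ is — after dividing out the common factor $1/\varphi(-q)$ and splitting $\prod_{j\ge 1}(1+q^j)$ into its even-index and odd-index subproducts — equivalent to
\[ \frac{2\psi(q^4)}{\varphi(-q)+\varphi(q^2)}=\frac{\prod_{j\equiv\pm 1\ (\mathrm{mod}\ 8)}\left(1+q^j\right)}{\prod_{j\equiv\pm 3\ (\mathrm{mod}\ 8)}\left(1+q^j\right)} . \]
Two applications of the Jacobi triple product (take $q\mapsto q^4$, with second variable $q^{-3}$ and $q^{-1}$ respectively) give
\[ \prod_{j\ge 1}(1-q^{8j})\!\!\prod_{j\equiv\pm 1\ (\mathrm{mod}\ 8)}\!\!(1+q^j)=\sum_{n\in\mathbb{Z}}q^{4n^2-3n},\qquad \prod_{j\ge 1}(1-q^{8j})\!\!\prod_{j\equiv\pm 3\ (\mathrm{mod}\ 8)}\!\!(1+q^j)=\sum_{n\in\mathbb{Z}}q^{4n^2-n}, \]
so the displayed identity reduces in turn to the theta-function identity
\[ 2\psi(q^4)\sum_{n\in\mathbb{Z}}q^{4n^2-n}=\left(\varphi(-q)+\varphi(q^2)\right)\sum_{n\in\mathbb{Z}}q^{4n^2-3n} . \]

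This last identity is the crux, and is where I expect the genuine work to be: it is the single statement that encodes the (a priori unexpected) combinatorial content of the odd-indexed part of the theorem. I would prove it by matching the two sides as theta series — expanding $2\psi(q^4)=\sum_{n\in\mathbb{Z}}q^{2n^2+2n}$ and reorganizing the resulting double sums — or, if no clean grouping presents itself, by invoking the relevant identity from the standard theta-function literature (cf.~\cite{b06}). Everything preceding it is routine even/odd dissection plus the Jacobi triple product; once the theta identity is in hand, the chain of equivalences above delivers both $v(2m)=\overline{p}(m)$ and $v(2m+1)=\overline{p}_r(m)$.
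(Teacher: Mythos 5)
Your reduction of \eqref{vRec} to $V(x)=2/\bigl(\varphi(-x)+\varphi(-x^2)\bigr)$ and your treatment of the even-indexed case are correct and complete: combining the two fractions over a common denominator and using $\varphi(x)\varphi(-x)=\varphi(-x^2)^2$ does collapse $\tfrac12\bigl(V(x)+V(-x)\bigr)$ to $1/\varphi(-x^2)$, which is the overpartition generating function in $x^2$. This is a genuinely different (and arguably cleaner) route than the paper's for that half: the paper instead writes $\varphi(-x)+\varphi(-x^2)=\bigl(\prod_{k\ge1}(1-x^{2k-1})\bigr)\bigl(F(x)+F(-x)\bigr)$ with $F(x)=\prod_{k\ge1}(1-x^k)$ and simplifies $F(x^2)/\bigl(F(x)F(-x)\bigr)$ directly.

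The odd-indexed case, however, has a genuine gap. Your intermediate reductions are all correct: $\tfrac12\bigl(V(x)-V(-x)\bigr)$ does equal $2x\psi(x^8)/\bigl(\varphi(-x^2)(\varphi(-x^2)+\varphi(x^4))\bigr)$, and matching this against the product generating function for $\overline{p}_r$ is, after two applications of the Jacobi triple product, equivalent to
\[ 2\psi(q^4)\sum_{n\in\mathbb{Z}}q^{4n^2-n}=\bigl(\varphi(-q)+\varphi(q^2)\bigr)\sum_{n\in\mathbb{Z}}q^{4n^2-3n}. \]
But this identity is exactly where the substance of the theorem lives, and you do not prove it. It is not a named identity one can simply cite, and ``matching the two sides as theta series'' is not a proof; the left side is a product of two theta series while the right side involves the non-factoring sum $\varphi(-q)+\varphi(q^2)$, so a naive regrouping of double sums does not obviously close. (The identity is true --- I checked it through $q^{10}$ --- and it is essentially equivalent to what the paper establishes in its Lemma~\ref{PentLemma}: there the authors $2$-dissect the pentagonal number series $\sum_{k}(-1)^kx^{k(3k-1)/2}$, i.e., compute $F(x)\pm F(-x)$, and identify each half as a quintuple product.) To complete your argument you would need work of comparable weight, for instance translating $\varphi(-q)+\varphi(q^2)$ and $2q\psi(q^8)=\tfrac12\bigl(\varphi(q)-\varphi(-q)\bigr)$ into infinite products via the quintuple product identity and verifying the resulting product identity. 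As written, the proof is incomplete at its crux.
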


\vskip 6mm
In~\cite{m21} (cf.  \cite[p. 234, Thm. 14.1]{a76}), 
P. A. MacMahon presents a recurrence that permits the 
determination of the parity of $p(n)$ more quickly than using~\eqref{EulerRec} directly,
namely
\begin{equation} \label{MacMahonRec}
p(n) \equiv \sum_{t\in T} p\left( \frac{n-t}{4} \right) \pmod{2},
\end{equation}
where $T = T(n)$ is the set of integers such that $t$ is triangular, $0\leq t\leq n$,
and $t\equiv n \pmod{4}$.   
MacMahon's recurrence~\eqref{MacMahonRec} follows from the following sequence of 
observations:  It follows that
\[ \sum_{t\in T} p\left( \frac{n-t}{4} \right)  = qq(n) \] by extracting the coefficient of $x^n$ in the
extremes of
\begin{align*}
\sum_{n\geq 0} qq(n) x^n &= \prod_{k\geq 1} (1+x^{2k-1}) \\
&=  \prod_{k\geq 1} \frac{(1+x^{4k-3})(1+x^{4k-1})(1-x^{4k}) }{1-x^{4k} } \\
& = \left( \sum_{j\geq 0} p(j) x^{4j} \right) \psi(x) \\
& = \sum_{n\geq 0} \sum_{t\in T}  p\left( \frac{n-t}{4} \right) x^n.
\end{align*}
Next, we observe that $qq(n) \equiv p(n)\pmod{2}$ for all integers $n$, because

\begin{align*}
\sum_{n\geq 0} p(n) x^n & = \prod_{k\geq 1} \frac{1}{1-x^k}\\
&\equiv \prod_{k\geq 1} \frac{1}{1+x^{k}} \pmod{2} \\
& = \prod_{k\geq 1} (1-x^{2k-1}) \\
&\equiv\prod_{k\geq 1} (1+x^{2k-1}) \pmod{2} \\
& = \sum_{n\geq 0} qq(n) x^n.
\end{align*}

Here we present a different Euler-type recurrence that quickly determines the parity
of $p(n)$.

\begin{theorem} \label{ParityRec}
For all integers $n$,
\begin{equation}  
\sum_{j\geq 0} p(n- 4\pi_j) \equiv
    \left\{ \begin{array}{ll} 1 \pmod{2} & \mbox{if $n$ is a triangular number,} \\
                0\pmod{2} & \mbox{otherwise} 
  \end{array} \right.
\end{equation}  
where $\pi_j$ is the $j$th generalized pentagonal number, i.e.,
\[ \pi_j = \frac 38 j^2 + \frac{3-(-1)^j}{8} j + \frac{1-(-1)^j}{16}. \]
\end{theorem}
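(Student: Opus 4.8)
The plan is to pass to formal generating functions and work modulo $2$. For $n<0$ both sides of the asserted congruence vanish (no term $p(n-4\pi_j)$ has nonnegative argument, and $n$ is not triangular), so it suffices to treat $n\geq 0$; and for $n\geq 0$ the left-hand side is exactly the coefficient of $x^n$ in
\[ F(x):=P(x)\,\Pi(x),\qquad \Pi(x):=\sum_{j\geq 0}x^{4\pi_j}, \]
since $\bigl(\sum_n p(n)x^n\bigr)\bigl(\sum_{j\geq 0}x^{4\pi_j}\bigr)=\sum_n\bigl(\sum_{j\geq 0}p(n-4\pi_j)\bigr)x^n$. Thus it is enough to prove $F(x)\equiv\psi(x)\pmod 2$, because $[x^n]\psi(x)$ equals $1$ when $n$ is triangular and $0$ otherwise (the triangular numbers $m(m+1)/2$, $m\geq 0$, being pairwise distinct).

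The second step rewrites $\Pi(x)$ via the pentagonal number theorem. As $j$ ranges over $\{0,1,2,\dots\}$, $\pi_j$ runs once through the generalized pentagonal numbers $\{k(3k-1)/2:k\in\mathbb Z\}$, so $\Pi(x)=\sum_{k\in\mathbb Z}x^{2k(3k-1)}$; substituting $x\mapsto x^4$ in the pentagonal number theorem gives $\sum_{k\in\mathbb Z}(-1)^k x^{2k(3k-1)}=\prod_{k\geq 1}(1-x^{4k})$, and reducing modulo $2$ erases the sign. Hence
\[ \Pi(x)\equiv\prod_{k\geq 1}(1-x^{4k})\pmod 2, \]
\[ F(x)\equiv\frac{\prod_{k\geq 1}(1-x^{4k})}{\prod_{k\geq 1}(1-x^{k})}\pmod 2. \]

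The heart of the matter is a short manipulation in $\mathbb F_2[[x]]$. By the Frobenius relation $(1-y)^2\equiv 1-y^2\pmod 2$ one may replace $\prod_{k\geq 1}(1-x^{4k})$ by $\bigl(\prod_{k\geq 1}(1-x^{2k})\bigr)^2$; then, splitting $\prod_{k\geq 1}(1-x^{k})=\prod_{k\geq 1}(1-x^{2k})\prod_{k\geq 1}(1-x^{2k-1})$ and cancelling one copy of $\prod_{k\geq 1}(1-x^{2k})$ (legitimate, since all these products have constant term $1$) leaves
\[ F(x)\equiv\frac{\prod_{k\geq 1}(1-x^{2k})}{\prod_{k\geq 1}(1-x^{2k-1})}=\psi(x)\pmod 2, \]
where the last equality is Gauss's product representation of $\psi$, a specialization of the Jacobi triple product (equivalently, the product form of $\psi$ implicit in the derivation of~\eqref{MacMahonRec}). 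Extracting the coefficient of $x^n$ from $F(x)\equiv\psi(x)\pmod 2$ then completes the proof.

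I anticipate no genuine obstacle. The one idea needed is that multiplying the generalized pentagonal numbers by $4$ keeps them within reach of the pentagonal number theorem through the substitution $x\mapsto x^4$; once that is in hand, the mod-$2$ collapse $\prod(1-x^{4k})\equiv\prod(1-x^{2k})^2$ does the real work, cancelling neatly against $1/P(x)$ to leave $\psi$. The only point requiring mild care is that reduction modulo $2$ commutes with the Cauchy product and with the infinite products involved, which is routine since $\mathbb Z[[x]]\to\mathbb F_2[[x]]$ is a ring homomorphism and every coefficient in sight is determined by finitely many factors.
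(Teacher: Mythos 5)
Your proof is correct and follows essentially the same route as the paper: both reduce the statement to the mod-$2$ identity $P(x)\sum_{j\ge 0}x^{4\pi_j}\equiv\psi(x)$, using the pentagonal number theorem at $x^4$ to replace $\sum_j x^{4\pi_j}$ by $\prod_{k\ge 1}(1-x^{4k})$ and then a short product manipulation in $\mathbb{F}_2[[x]]$. The only (cosmetic) difference is the middle step — you use the Frobenius squaring $\prod(1-x^{4k})\equiv\bigl(\prod(1-x^{2k})\bigr)^2$, whereas the paper works from $\psi(x)=\prod\frac{1-x^{2k}}{1-x^{2k-1}}$ via $\prod(1+x^k)=\prod(1-x^{2k-1})^{-1}$ — and the two computations are equivalent.
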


Before providing proofs of our results in the next section, we acknowledge 
analogous results by other authors.
In addition to the work of Ewell~\cite{e73,e04} previously mentioned,
  K. Ono, N. Robbins, and B. Wilson~\cite{onw96} 
and Robbins~\cite{r02} have proved recurrences
of this type for $qq(n)$.

\section{Proofs of Theorems~\ref{squares}--\ref{ParityRec}}

\subsection{Proof of Theorem~\ref{squares}}

Since 
   \[  \sum_{n\geq 0} qq(n) x^n = \prod_{k\geq 1} (1+x^{2k+1}), \]
and for any power series 
  \[ f(x) = a_0 + a_1 x + a_2 x^2 + a_3 x^3 + \cdots, \] it is the case that
  
 \begin{equation} \label{ExtractEvens} 
 \frac{f(x) + f(-x)}{2} = a_0 + a_2 x^2 + a_4 x^4 + a_6 x^6 + \cdots, \end{equation}
we have that
\begin{align*}
\sum_{n\geq 0} \frac{1+(-1)^n}{2} qq(n) x^n &=
\sum_{k\geq 0} qq(2k) x^{2k} \\
&= \frac{1}{2} \left(  \prod_{k\geq 1} (1+x^{2k-1})
+ \prod_{k\geq 1} (1-x^{2k-1}) \right) \\
& = \frac 12\prod_{k\geq 1} \frac{1-x^{4k-2}}{1-x^{2k-1}} 
+ \frac 12\prod_{k\geq 1} \frac{(1-x^{2k-1})(1-x^k)}{1-x^{k}}\\
&= \frac{ \varphi(-x^2) + \varphi(-x)}{2 \prod_{k\geq 1} (1-x^k)}\\
& = \left( \sum_{k\geq 0} p(k) x^k \right) \left( 1 + \sum_{j\geq 1} (-1)^j x^{2j^2} +
\sum_{j\geq 1} (-1)^j x^{j^2} \right)\\
&= \sum_{n\geq 0} \left( p(n) + \sum_{j\geq 1} (-1)^j \left[ p(n-2j^2) + p(n-j^2) \right]  \right) x^n.
\end{align*}
Note that we are using \[ \varphi(x) = \prod_{k\geq1}(1+x^{2k-1})^2(1-x^{2k}) \] (see, e.g.,~\cite[p. 11, Eq. (1.3.13)]{b06}).
The result then follows from extracting the coefficient of $x^n$ in the extremes. \qed

\subsection{Proof of Theorem~\ref{squaresvar}}

Since 
\begin{align*}
\sum_{n\geq 0} (-1)^n qq(n) x^n 
&= \prod_{k\geq 1} (1-x^{2k-1}) \\
&= \prod_{k\geq1} \frac{(1-x^{2k-1})(1-x^{k})}{(1-x^k)}\\
&=\frac{\varphi(-x)}{\prod_{k\geq 1}{1-x^k}}\\
& = \left( \sum_{k\geq 0} p(k) x^k \right) \left( 1 + 2\sum_{j\geq 1} (-1)^j x^{j^2} \right)\\
\end{align*}
The result then follows from extracting the coefficient of $x^n$ in the extremes. \qed

\subsection{Proof of Theorem~\ref{heptagonal}}

By Jacobi's triple product identity~\cite[p. 11, Eq. (1.3.14)]{b06},
\[  \sum_{j=-\infty}^\infty (-1)^j x^{j(5j-3)/2} = \prod_{k\geq 1} (1-x^{5k})(1-x^{5k-4})(1-x^{5k-1}). \]
Thus, 
\[ \left(\sum_{k\geq 0} p(k) x^k \right) \left( 1 + \sum_{j\geq 1} (-1)^j x^{j(5j-3)/2} + \sum_{j\geq 1} (-1)^j x^{j(5j+3)/2} \right) \] \\
\[=\prod_{k\geq 1} \frac {1} {(1-x^{5k-2})(1-x^{5k-3})} \] \\
\[=\prod_{k\geq1}\frac {(1-x^{10k})(1-x^{20k-16})(1-x^{20k-4})(1+x^{10k-7})(1+x^{10k-3})}{1-x^{2k}} \] \\
\[=\left( \sum_{k\geq 0} p(k) x^{2k} \right) \left( \sum_{j=-\infty}^{\infty} (-1)^j \left(x^{15j^2-4j}+x^{15j^2+14j+3}\right)\right),\] \\
where the fact that \[ \prod_{k\geq1} {(1-x^{10k})(1-x^{20k-16})(1-x^{20k-4})(1+x^{10k-7})(1+x^{10k-3})} \] \[= \sum_{j=-\infty}^{\infty}  (-1)^j \left(x^{15j^2-4j}+x^{15j^2+14j+3}\right) \] follows from the quintuple product identity~\cite[p. 18, Eq. (1.3.52)]{b06}.
The result then follows from extracting the coefficient of $x^n$ in the extremes.   \qed

\subsection{Proof of Theorem~\ref{octagonal}}

By Jacobi's triple product identity,
\[  \sum_{j=-\infty}^{\infty}  (-1)^j x^{3j^2-2j} = \prod_{k\geq 1} 
(1-x^{6k})(1-x^{6k-5})(1-x^{6k-1}). \]
Thus, 
\begin{align*}
\left(\sum_{k\geq 0} p(k) x^k \right) \left( \sum_{j=-\infty}^{\infty}  
(-1)^j x^{3j^2-2j} \right) &= 
\prod_{k\geq 1} \frac {1} {(1-x^{6k-4})(1-x^{6k-3})(1-x^{6k-2})} \\
&=\prod_{k\geq1}\frac {(1-x^{12k})(1+x^{12k-9})(1+x^{12k-3})}{1-x^{2k}}  \\
&=\left(\prod_{k\geq1}\frac {1} {1-x^{2k}} \right) \psi(x^3) \\
&=\left( \sum_{k\geq 0} p(k) x^{2k} \right) \left(\sum_{j\geq 0} x^{3t_j}\right) 
\end{align*}
The result then follows from extracting the coefficient of $x^n$ in the extremes.  \qed

\subsection{Proof of Theorem~\ref{p_2} }
First we introduce a lemma.

\begin{lemma} \label{lem}
For functions $P$ and $\psi$ defined above, the following holds:
  \begin{equation} \label{L}
  P(x) \pm P(-x) = P(x^2)^2 \left( \psi(x) \pm \psi(-x) \right). \end{equation}
\end{lemma}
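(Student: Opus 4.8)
The plan is to prove the identity $P(x) \pm P(-x) = P(x^2)^2\bigl(\psi(x) \pm \psi(-x)\bigr)$ by converting both sides into infinite products and reducing to a known theta-function identity. First I would recall the product representations already in play: $P(x) = \prod_{k\ge1}(1-x^k)^{-1}$ and $\psi(x) = \sum_{n\ge0}x^{n(n+1)/2}$, which by Jacobi's triple product equals $\prod_{k\ge1}\frac{1-x^{2k}}{1-x^{2k-1}} = \prod_{k\ge1}(1-x^{2k})(1+x^k)$ after simplification. A cleaner route, though, is to avoid products for the theta functions and instead work with the three classical ``Ramanujan'' quantities $\varphi$, $\psi$, and the product $f(-q)=\prod(1-q^k)$, since the paper has already used $\varphi(-x) = \prod_{k\ge1}(1-x^{2k-1})(1-x^{2k})$-type factorizations in the proofs of Theorems~\ref{squares} and~\ref{squaresvar}.

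The key observation is that $\frac{P(x)\pm P(-x)}{2}$ picks out, respectively, the even-index and odd-index terms of the series $P(x) = \sum p(n)x^n$ (using the elementary fact~\eqref{ExtractEvens} already invoked in the paper), so the two cases correspond to generating functions for $p(n)$ restricted to even $n$ and odd $n$. So I would handle the ``$+$'' case first: show $P(x)+P(-x) = \frac{2}{\prod_k(1-x^k)} \cdot \bigl(\text{correction factor}\bigr)$ by combining the two products over a common denominator, exactly as in the proof of Theorem~\ref{squares} where $\prod(1+x^{2k-1})\pm\prod(1-x^{2k-1})$ was rewritten. The upshot should be that $P(x)+P(-x)$ and $P(x)-P(-x)$ each collapse to a single product that, after substituting $x\mapsto x$ and regrouping the exponents into arithmetic progressions modulo a small integer, matches $P(x^2)^2$ times a theta series in $x$. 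The precise bookkeeping of which residue classes survive is where I expect the real work to be.

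The main obstacle will be matching $\psi(x)\pm\psi(-x)$ with the theta-function factor that comes out of the product manipulation. Here I would use that $\psi(x) = \sum_{n\ge0} x^{n(n+1)/2}$, so $\psi(x)+\psi(-x) = 2\sum_{n(n+1)/2 \text{ even}} x^{n(n+1)/2}$ and $\psi(x)-\psi(-x) = 2\sum_{n(n+1)/2 \text{ odd}} x^{n(n+1)/2}$, and $n(n+1)/2$ is even precisely when $n\equiv 0,3\pmod4$ and odd when $n\equiv 1,2\pmod4$. Re-indexing these (writing $n=4m$, $4m+3$ in one case and $n=4m+1$, $4m+2$ in the other) should express each combination as a sum of two ordinary theta series $\sum_m x^{\text{quadratic in }m}$, and I would recognize these via Jacobi's triple product as products. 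Then the identity~\eqref{L} reduces to a product identity — cancel $P(x^2)^2 = \prod_k(1-x^{2k})^{-2}$ against whatever denominators appear, and verify that the remaining finite products of factors $(1\pm x^j)$ and $(1-x^j)$ on the two sides agree term-by-term in each residue class. An alternative, which might actually be the slickest, is to prove the $+$ case and the $-$ case simultaneously by substituting $x \mapsto ix$ (or $x\mapsto -x$) and noting the two cases are related, but that requires care with the non-polynomial nature of $P$; I would fall back on the residue-class product check if that shortcut gets delicate. I expect the whole argument to be one page of product algebra with the triple product identity doing the heavy lifting, and the only genuinely tricky point being the correct determination of the exponent progressions in $\psi(x)\pm\psi(-x)$.
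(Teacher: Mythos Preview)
Your plan is viable but takes a much longer road than the paper does. The paper's proof is a five-line product manipulation that never splits $\psi(x)\pm\psi(-x)$ into parity classes of triangular numbers at all: it simply invokes the product form $\psi(x)=\prod_{k\ge1}\frac{1-x^{2k}}{1-x^{2k-1}}$ (hence $\psi(-x)=\prod_{k\ge1}\frac{1-x^{2k}}{1+x^{2k-1}}$), starts from the right-hand side $P(x^2)^2\bigl(\psi(x)\pm\psi(-x)\bigr)$, cancels one copy of $\prod_{k\ge1}(1-x^{2k})^{-1}$ against each summand, distributes, and recognizes the two resulting products
\[
\prod_{k\ge1}\frac{1}{(1-x^{2k})(1-x^{2k-1})}\quad\text{and}\quad \prod_{k\ge1}\frac{1}{(1-x^{2k})(1+x^{2k-1})}
\]
as $P(x)$ and $P(-x)$ respectively. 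No theta decomposition of $\psi(x)\pm\psi(-x)$, no analysis of when $n(n+1)/2$ is even or odd, no residue-class bookkeeping, no quintuple- or triple-product identity beyond the single use that gives $\psi$ its product form.

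Your route would eventually arrive at the same identity, but only after expressing $\psi(x)\pm\psi(-x)$ as a pair of theta series indexed by $n\pmod 4$, converting each back to a product via Jacobi, and then checking that product against whatever you extract from $P(x)\pm P(-x)$ over a common denominator. The upside of your approach is that it produces an explicit closed form for $\psi(x)\pm\psi(-x)$ along the way, which the paper never needs and never writes down; the downside is that it multiplies the length of the argument several-fold and introduces exactly the step (the ``correct determination of the exponent progressions'') that you yourself flag as delicate. The paper's key move --- keep $\psi(x)$ and $\psi(-x)$ as separate unevaluated product expressions and let the $\pm$ sign ride along formally through the algebra --- sidesteps all of that.
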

\begin{proof}
  By a theorem of Euler, 
    \begin{equation}
      P(x) = \prod_{k\geq 1} \frac{1}{1-x^k}.
    \end{equation}
  By Jacobi's triple product identity followed by some algebraic manipulation,
    \begin{equation}
       \psi(x) = \prod_{k\geq 1} \frac{1-x^{2k}}{1-x^{2k-1}}.
    \end{equation}  
Thus 
\begin{align*}
P(x^2)^2 \left( \psi(x) \pm \psi(-x) \right)  &= 
\prod_{k\geq 1}\frac{1}{(1-x^{2k})^2}
 \left(  \prod_{k\geq 1} \frac{1-x^{2k}}{1-x^{2k-1}} \pm  
  \prod_{k\geq 1} \frac{1-x^{2k}}{1+x^{2k-1}} \right) \\
 & = \prod_{k\geq 1}\frac{1}{(1-x^{2k})}
 \left(  \prod_{k\geq 1} \frac{1}{1-x^{2k-1}} \pm  
  \prod_{k\geq 1} \frac{1}{1+x^{2k-1}} \right) \\
  & = \prod_{k\geq 1} \frac{1}{(1-x^{2k})(1-x^{2k-1})} \pm  
  \prod_{k\geq 1} \frac{1}{(1-x^{2k})(1+x^{2k-1})} \\
  &=  \prod_{k\geq 1} \frac{1}{1-x^k} \pm  \prod_{k\geq 1} \frac{1}{1-(-x)^k}\\
  &=P(x) \pm P(-x).
\end{align*}
\end{proof}

Now we may prove Theorem~\ref{p_2}.
\begin{proof}[Proof of Theorem~\ref{p_2}]
By Eq.~\eqref{ExtractEvens},
\[ \frac{P(x) + P(-x)}{2} = p(0) + p(2) x^2  + p(4) x^4 + p(6) x^6 + \cdots . \]
Also, by Lemma~\ref{lem},
\begin{multline*}
P(x^2)^2 \left( \frac{ \psi(x) + \psi(-x)}{2} \right)  \\
= \Big(  p_2(0) + p_2(1) x^2 + p_2(2) x^4 + \cdots \Big) ( x^0+ x^6 + x^{10} + x^{28} + x^{36}+\cdots )
\end{multline*}
The result for even $n$ thus follows from extracting the coefficient of $x^n$ from both
sides of
\[ \frac{P(x) + P(-x)}{2} = P(x^2)^2 \left( \frac{ \psi(x) + \psi(-x)}{2} \right) . \]

By reasoning similar to that of~\eqref{ExtractEvens},
\[ \frac{P(x) - P(-x)}{2} = p(1)x + p(3) x^3  + p(5) x^5 + p(7) x^7 + \cdots . \]
Also,
\begin{multline*}
P(x^2)^2 \left( \frac{ \psi(x) - \psi(-x)}{2} \right)  \\
= \Big(  p_2(0) + p_2(1) x^2 + p_2(2) x^4 + \cdots \Big) ( x+ x^3 + x^{15} + x^{21} +\cdots ).
\end{multline*}
The result for odd $n$ thus follows from extracting the coefficient of $x^n$ from both
sides of
\[ \frac{P(x) - P(-x)}{2} = P(x^2)^2 \left( \frac{ \psi(x) - \psi(-x)}{2} \right) . \]
\end{proof}

\subsection{Proof of Theorem~\ref{vRec}}

First we state a lemma.
\begin{lemma} \label{PentLemma}
Let \[ F(x) = \frac {1} {P(x)} = \prod_{k\geq1}(1-x^k). \] Then  
  \[ \frac {F(x) - F(-x)} {2} = -x\prod_{k\geq 1} (1-x^{16k-6})(1-x^{16k-10})(1-x^{16k})(1-x^{32k-4})(1-x^{32k-28}) \] 
  \center{and}
  \[ \frac {F(x) + F(-x)} {2} = \prod_{k\geq 1} (1-x^{16k-2})(1-x^{16k-14})(1-x^{16k})(1-x^{32k-12})(1-x^{32k-20}). \]
\end{lemma}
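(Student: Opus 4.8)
The plan is to expand $F(x)$ via the pentagonal number theorem as $F(x)=\sum_{j\in\mathbb{Z}}(-1)^j x^{g_j}$, where $g_j:=j(3j-1)/2$, and then to separate this series according to the parity of the generalized pentagonal number $g_j$. A short computation modulo $4$ shows that $g_j$ is even exactly when $j\equiv 0,3\pmod 4$ and odd exactly when $j\equiv 1,2\pmod 4$; consequently $\tfrac12\big(F(x)+F(-x)\big)$ is the sum of the terms with $j\equiv 0,3\pmod 4$, and $\tfrac12\big(F(x)-F(-x)\big)$ is the sum of the terms with $j\equiv 1,2\pmod 4$.

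Next I would reindex each residue class by a single integer variable. Writing $j=4i$ and $j=4i-1$ (so that the signs $(-1)^j$ become $+1$ and $-1$) and simplifying the exponents gives
\[
\frac{F(x)+F(-x)}{2} = \sum_{i\in\mathbb{Z}}\left(x^{24i^2-2i}-x^{24i^2-14i+2}\right),
\]
while writing $j=4i+1$ and $j=4i+2$ gives
\[
\frac{F(x)-F(-x)}{2} = \sum_{i\in\mathbb{Z}}\left(x^{24i^2+22i+5}-x^{24i^2+10i+1}\right)= -x\sum_{i\in\mathbb{Z}}\left(x^{24i^2-10i}-x^{24i^2+26i+6}\right),
\]
the last step after extracting $-x$ and replacing $i$ by $-i$, respectively $-i-1$, in the two sums. (Both displayed formulas can be checked directly against the first several pentagonal numbers.)

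Finally, each of these bilateral series is a specialization of the right-hand side of the quintuple product identity (the identity already used in the proof of Theorem~\ref{heptagonal}), taken in the form
\[
\prod_{n\ge1}(1-q^n)(1-q^nz)(1-q^{n-1}z^{-1})(1-q^{2n-1}z^2)(1-q^{2n-1}z^{-2})=\sum_{n\in\mathbb{Z}}q^{n(3n+1)/2}\big(z^{3n}-z^{-3n-1}\big).
\]
Specializing to $(q,z)=(x^{16},x^{-2})$ and replacing $n$ by $-i$ reproduces the even-part series above, and specializing to $(q,z)=(x^{16},x^{-6})$ reproduces (up to the factor $-x$ already extracted) the odd-part series; expanding the corresponding left-hand products then gives exactly the two infinite products claimed in the lemma.

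I expect the main obstacle to be fixing the correct form and specialization of the quintuple product identity and managing the accompanying bookkeeping: the asymmetric term $z^{-3n-1}$ on its right-hand side is what necessitates the additional reindexing and the factor of $-x$ in the odd case, and sign errors or an off-by-one in the index are easy to make. The parity computation and the simplification of exponents, by contrast, are routine.
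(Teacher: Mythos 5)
Your proposal is correct and follows essentially the same route as the paper: split the pentagonal number series $\sum_{j}(-1)^j x^{j(3j-1)/2}$ into its even- and odd-exponent parts (your parity analysis mod $4$ and reindexing reproduce exactly the bilateral series $\sum_j(x^{24j^2+2j}-x^{24j^2+14j+2})$ and $\sum_j(x^{24j^2+26j+7}-x^{24j^2-10j+1})$ that the paper writes down), and then apply the quintuple product identity; your specializations $(q,z)=(x^{16},x^{-2})$ and $(x^{16},x^{-6})$ check out and yield the two stated products. In fact you supply more detail than the paper, which simply asserts the series forms and cites the quintuple product identity without exhibiting the specialization.
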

\begin{proof}
The terms in the series 
\[ \frac {F(x) - F(-x)} {2} \] 
are the odd power terms in \[ \sum_{k\in\mathbb{Z}} (-1)^k x^{k(3k-1)/2} \] and thus 
\begin{align*}
\frac {F(x) - F(-x)} {2} &= \sum_{j=-\infty}^{\infty}(x^{24j^2+26j+7}-x^{24j^2-10j+1}) \\
&=-x\prod_{k\geq 1}(1-x^{16k-6})(1-x^{16k-10})(1-x^{16k})(1-x^{32k-4})(1-x^{32k-28}), 
\end{align*}
where the last equality follows from the quintuple product identity.\
The terms in the series 
\[ \frac {F(x) + F(-x)} {2} \] 
are the even power terms in \[ \sum_{k\in\mathbb{Z}} (-1)^k x^{k(3k-1)/2}, \] and thus 
\begin{align*}
\frac {F(x) + F(-x)} {2} &= \sum_{j=-\infty}^{\infty}(x^{24j^2+2j}-x^{24j^2+14j+2}) \\
&=\prod_{k\geq 1}(1-x^{16k-2})(1-x^{16k-14})(1-x^{16k})(1-x^{32k-12})(1-x^{32k-20}) 
\end{align*}
where the last equality follows from the quintuple product identity.
\end{proof}

Now we proceed to a proof of Theorem~\ref{vRec}.
\begin{proof}[Proof of Theorem~\ref{vRec}]
Let $$V(x) := \sum_{n\geq 0} v(n) x^n.$$
By the definition of $v(n)$, and from information gathered during the proof of 
Theorem~\ref{squares}, it must be the case that
 \[ \frac{1}{V(x)} = \frac{ \varphi(-x) + \varphi(-x^2)}{2}. \]
Thus, we have immediately that
 \[ V(x) =  \frac{2}{ \varphi(-x) + \varphi(-x^2)}. \]
Using the product form of $\varphi(x)$, we can express 
\begin{align*}
 V(x) &= \frac {2} {\left(\prod_{k\geq 1} (1-x^{2k-1}) \right)
 \left(F(x) + F(-x) \right)} \\
 &= \frac {2F(x^2)} {F(x) \left(F(x) + F(-x) \right)}.
 \end{align*} 
For the first part of the theorem, we wish to extract the even powers of $V(x)$, and so we find
\begin{align*}
\frac{V(x) + V(-x)}{2} &=  \frac {F(x^2)} {F(x)F(-x)} \\
 &=\prod_{k\geq 1}\frac {1+x^{2k}} {1-x^{2k} }.
 \end{align*}
Recalling that 
\[ \sum_{n\geq 0} \overline{p}(n) x^n = \prod_{k\geq 1} \frac {1+x^{k}} {1-x^{k}}, \] and that we
have just shown that
\[ \sum_{j\geq 0} v(2j) x^j=
 \frac{V(x^{1/2}) + V(-x^{1/2})}{2} = 
 \prod_{k\geq 1}\frac {1+x^{k}} {1-x^{k}}, \] we conclude
that $v(n) = \overline{p}(n/2)$ when $n$ is even.

For the second part of the theorem, we wish to extract the odd powers of $V(x)$, and so we find

\[ \frac{V(x) - V(-x)}{2} =  \frac {F(x^2)\left( F(-x)-F(x) \right)} {F(x)F(-x)\left( F(x)+F(-x) \right)} \]
\[ = x\prod_{k\geq 1} \frac { (1+x^{2k})(1-x^{16k-6})(1-x^{16k-10})(1-x^{16k})(1-x^{32k-4})(1-x^{32k-28})} { (1-x^{2k})(1-x^{16k-2})(1-x^{16k-14})(1-x^{16k})(1-x^{32k-12})(1-x^{32k-20})} \]
\begin{align*}
&=x\prod_{k\geq 1}\frac { (1+x^{2k})(1+x^{16k-2})(1+x^{16k-14})} 
{ (1-x^{2k})(1+x^{16k-6})(1+x^{16k-10})} \\
&=x\prod_{k\geq 1}\frac { (1+x^{4k})(1+x^{16k-2})^2(1+x^{16k-14})^2} {1-x^{2k}} .
\end{align*}

Noting that the generating function for our restricted overpartitions is   
\[ \sum_{n\geq 0} \overline{p}_{r}(n) x^n = 
\prod_{k\geq 1}\frac { (1+x^{2k})(1+x^{8k-1})^2(1+x^{8k-7})^2} 
{ 1-x^{k}}, \] and that we
have just shown that
\begin{align*}
\sum_{j\geq 0} v(2j+1) x^j &= \frac{V(x^{1/2}) - V(-x^{1/2})}{2x^{1/2}} \\
&= \prod_{k\geq 1}\frac {(1+x^{2k})(1+x^{8k-1})^2(1+x^{8k-7})^2} { 1-x^{k}}, 
\end{align*}
we conclude
that $v(n) = \overline{p}_{r}((n-1)/2)$ when $n$ is odd.

\end{proof}

\subsection{Proof of Theorem~\ref{ParityRec}}

For two power series $f(x) = \sum_{n\geq 0} a_n x^n$ and 
$g(x) = \sum_{n\geq 0} b_n x^n$, we write
\[ f(x) \equiv g(x) \pmod{m}, \] if
$a_n \equiv b_n \pmod{m}$ for all $n\geq 0$.

Let $\chi(S) = 1$ if the statement $S$ is true and $0$ if $S$ is false.

\begin{align*}
\sum_{n\geq 0} \chi(\mbox{$n$ is triangular}) x^n &=
\sum_{k\geq 0} x^{k(k+1)/2} \\
&= \prod_{k\geq 1} \frac{1- x^{2k}}{1 - x^{2k-1}} \\
& \equiv \prod_{k\geq 1} \frac{1+ x^{2k}}{1 - x^{2k-1}} \pmod{2} \\
& = \prod_{k\geq 1} (1+x^k)(1+ x^{2k}) \quad\mbox{by~\cite[p. 4, Eq. (1.1.9)]{b06}}\\
& = \prod_{k\geq 1} \frac{1}{1-x^k} \cdot (1-x^k)(1+x^k)(1+x^{2k}) \\
& = \prod_{k\geq 1} \frac{1}{1-x^k} \cdot (1-x^{2k})(1+x^{2k}) \\
& = \prod_{k\geq 1} \frac{1}{1-x^k} \cdot (1-x^{4k}) \\
&= \left( \sum_{k\geq 0} p(k) x^k \right) \left( \sum_{j\in\mathbb{Z}} (-1)^j x^{6j^2-2j} \right)\\
&= \sum_{n\geq 0} \left( \sum_{j\geq 0} p(n-4\pi_j) \right) x^n.
\end{align*}
The result follows from extracting the coefficient of $x^n$ in the extremes. \qed

\section{Conclusion}
We make no claims that our results improve upon Euler's recurrence in terms of their
computational efficiency.  In this regard (as is so often the case), Euler 
surely remains king.
Our main purpose here is to place Euler's recurrence in a larger context, and to present
some aesthetically pleasing results analogous to Euler's.

\section*{Acknowledgments}
We thank Dennis Eichhorn and James Sellers for pointing us to several
relevant references in the literature, and the anonymous referee who read our
manuscript carefully and pointed out a number of typographical errors.  
We thank Bruce Landman for 
organizing the INTEGERS 2016 conference, which facilitated the 
first and third authors becoming aware of the second author's 
earlier unpublished work on
this topic.  The result is this three-author joint paper.

\end{document}